\newtheorem{theorem}{Theorem}
\newtheorem{lemma}{Lemma}[section]
\theoremstyle{remark}
\newtheorem*{remark}{Remark}
\providecommand{\keywords}[1]
{
  \small	
  \textbf{Keywords:} #1
}
\providecommand{\subjclass}[2][2020]
{
  \small	
  \textbf{Mathematics Subject Classification(2020):} #2
}
\title{Non-Isomorphic Groups with Isomorphic Power and Commuting Graphs}
\author{Surbhi$^1$ and Geetha Venkataraman$^2$ \\
        \small $^{1,2}$School of Liberal Studies, Dr. B. R. Ambedkar University Delhi, India. \\
}
\date{} 
\begin{document}
\maketitle

\begin{abstract}
The power graph of a group $G$ is a graph with vertex set $G$, in which two vertices are adjacent if one is some power of the other. In the commuting graph, with $G$ as the vertex set, two vertices are joined by an edge if they commute in $G$. The enhanced power graph of a group $G$ is a graph with vertex set $G$ and an edge joining two vertices $x$ and $y$ if $\langle x,y \rangle$ is cyclic. In this paper, we answer a question posed by P. J. Cameron, namely, if there exist groups $G$ and $H$ such that the power graph of $G$ is isomorphic to the commuting graph of $H$. We show that the answer is yes if $G$ is the generalised quaternion group and $H$ is the dihedral group. We also show that the enhanced power graph of the dicyclic group is isomorphic to the commuting graph of the dihedral group. 
\end{abstract} \hspace{10pt}


\keywords{Groups, Generalised quaternion group, Dihedral group, Dicyclic group, Power graph, Enhanced power graph, Commuting graph}

\subjclass{20D60}

\section{Introduction}

In this paper, we consider undirected graphs whose vertex set is a finite group and whose edges are joined based on some group-theoretic property of the vertex set. Throughout this paper, the graphs considered are not allowed multiple edges or loops. The power graph Pow($G$) of a group $G$ is a graph in which two vertices $x$ and $y$ share an edge if either $x=y^m$ or $y=x^m$ for some integer $m$. The commuting graph Com($G$) of $G$ is a graph in which two vertices $x$ and $y$ are adjacent if $x$ and $y$ commute in $G$. In 2017, while trying to characterise when the power graph and commuting graph of a finite group are equal, Aalipour et al. \cite{Aalipour} defined a new graph, called the enhanced power graph, whose vertex set consists of all elements of a group $G$, in which two vertices $x$ and $y$ are adjacent if they generate a cyclic group. We denote this graph as EPow($G$).\\

Let $E$($\Gamma$) denote the edge set of the graph $\Gamma$. It is easy to see that $E$(Pow($G$)) $\subseteq$ $E$(EPow($G$)) $\subseteq$ $E$(Com($G$)). In \cite{Aalipour}, a necessary and sufficient condition was given for a finite group $G$ to satisfy Pow($G$) = Com($G$). Then, in \cite{PJC_review} P. J. Cameron asked the following question \textit{Do there exist groups $G$ and $H$ such that} Pow($G$)\textit{ is isomorphic to} Com($H$)\textit{?} He states that this will be true if Pow($G$) = Com($G$) and $G$ and $H$ have isomorphic commuting graphs, or if Pow($H$) = Com($H$) and $G$ and $H$ have isomorphic power graphs.

In this paper, we explore this question by considering the generalised quaternion group and the dihedral group whose order is some power of 2 and we show that even though they are not isomorphic as groups, the following holds.

\begin{theorem}\label{theo1}
    Let $Q_{2^{n+1}}$ denote the generalised quaternion group of order $2^{n+1}$ and $D_{2.2^n}$ denote the dihedral group of the same order. For each $n \in \mathbb{N}$, we have {\rm Pow(}$Q_{2^{n+1}}${\rm )} $\cong$ {\rm Com(}$D_{2.2^n}${\rm )}.
\end{theorem}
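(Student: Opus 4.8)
The plan is to show that both graphs share an identical ``shape'': two universal vertices (adjacent to everything), a clique on $2^n-2$ vertices, and a perfect matching on $2^n$ vertices, with no edges between the clique and the matching except through the two universal vertices. Once both graphs are seen to decompose in this way, any block-respecting bijection will be a graph isomorphism, so the real work is the two adjacency computations.

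First I would fix the presentation $Q_{2^{n+1}} = \langle a,b \mid a^{2^n}=1,\ b^2 = a^{2^{n-1}},\ bab^{-1}=a^{-1}\rangle$ and record three facts: $\langle a\rangle$ is cyclic of order $2^n$; every element outside $\langle a\rangle$ has order $4$ with square equal to the unique involution $z := a^{2^{n-1}}$; and $z$ lies in every nontrivial cyclic subgroup (being the unique involution of a $2$-group). For {\rm Pow}$(Q_{2^{n+1}})$ this yields: (i) $1$ and $z$ are universal vertices, since $1=x^{|x|}$ and $z=x^{|x|/2}$ are powers of every $x\neq 1$; (ii) the power graph restricted to the cyclic $2$-group $\langle a\rangle$ is complete, because the subgroups of a cyclic $p$-group form a chain, so of any two elements one is a power of the other; (iii) an element $x$ of order $4$ has $\langle x\rangle = \{1,x,z,x^{-1}\}$, so apart from $1$ and $z$ it is joined only to $x^{-1}=x^3$, and a nontrivial power of a noncentral $a^j$ can never equal $x$. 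Hence the $2^n$ order-$4$ elements split into $2^{n-1}$ inverse-pairs forming a matching, with no edges to $\langle a\rangle\setminus\{1,z\}$.

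Next I would run the parallel computation for {\rm Com}$(D_{2\cdot 2^n})$ using $\langle r,s \mid r^{2^n}=1,\ s^2=1,\ srs^{-1}=r^{-1}\rangle$. Computing centralizers gives: the central elements $1$ and $r^{2^{n-1}}$ are universal vertices; the abelian subgroup $\langle r\rangle$ is a clique of size $2^n$; a reflection $r^j s$ commutes with a noncentral rotation $r^i$ only when $2i\equiv 0 \pmod{2^n}$ (i.e.\ never), and commutes with another reflection $r^k s$ exactly when $2(j-k)\equiv 0 \pmod{2^n}$, i.e.\ with the single reflection $r^{j+2^{n-1}}s$. So the $2^n$ reflections again form $2^{n-1}$ disjoint edges with no edges to $\langle r\rangle\setminus\{1,r^{2^{n-1}}\}$.

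Finally I would exhibit the isomorphism: send $1\mapsto 1$ and $z\mapsto r^{2^{n-1}}$, map the clique $\langle a\rangle\setminus\{1,z\}$ onto $\langle r\rangle\setminus\{1,r^{2^{n-1}}\}$ by $a^i\mapsto r^i$, and map the $2^{n-1}$ inverse-pairs of order-$4$ elements bijectively onto the $2^{n-1}$ commuting reflection-pairs. Since both graphs are the join $K_2 \vee \bigl(K_{2^n-2}\sqcup 2^{n-1}K_2\bigr)$, this map preserves adjacency and non-adjacency, and a vertex count $2+(2^n-2)+2\cdot 2^{n-1}=2^{n+1}$ confirms it is a bijection. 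I expect the only delicate point to be step (iii) together with its dihedral counterpart: verifying that the matching edges are precisely the inverse-pairs, respectively the commuting reflection-pairs, and that there are genuinely no cross edges to the rotation clique. Conceptually the crux is that the \emph{unique involution} is forced to be universal in the power graph, mirroring the role of the nontrivial central element in the commuting graph; this is what makes the two matchings correspond. The degenerate case $n=1$, where $Q_4\cong\mathbb{Z}_4$ and $D_4$ is the Klein four-group and both graphs are $K_4$, I would dispose of separately.
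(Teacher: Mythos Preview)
Your proposal is correct and follows essentially the same route as the paper: both compute the edge structure of each graph, arrive at the common decomposition $K_2 \nabla (K_{2^n-2} \cup 2^{n-1}K_2)$, and use the natural bijection $a^ib^j \mapsto r^is^j$ (the paper writes it as $h^ix^j \mapsto a^ib^j$). The only cosmetic difference is that the paper verifies edge-preservation of this explicit map case by case using the centralizer lemmas, whereas you conclude directly from the abstract graph description once both sides are identified.
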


The order of the generalised quaternion group is some positive power of 2 but the dihedral group can have an order divisible by an odd prime too. The generalised quaternion group is a special case of the dicyclic group which has order $4m$ where $m$ is a positive integer. This motivated us to check whether Theorem \ref{theo1} holds for the power graph of $Q_{4m}$ and the commuting graph of $D_{2.2m}$. We show that although Pow($Q_{4m}$) is not necessarily isomorphic to Com($D_{2.2m}$), the following result is analogous to Theorem \ref{theo1}.

\begin{theorem}\label{theo2}
    Let $m \in \mathbb{N}$. If there exists an odd prime $p$ that divides $m$ then  {\rm EPow(}$Q_{4m}${\rm )} $\cong$ {\rm Com(}$D_{2.2m}${\rm )} where $Q_{4m}$ is the dicyclic group and $D_{2.2m}$ is the dihedral group.
\end{theorem}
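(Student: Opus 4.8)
The plan is to compute the edge sets of both graphs explicitly from the subgroup structure of the two groups and then to write down an explicit bijection of vertex sets that carries one edge set to the other. I write the dicyclic group as $Q_{4m} = \langle a, b \mid a^{2m} = 1,\ b^2 = a^m,\ bab^{-1} = a^{-1} \rangle$, so that its $4m$ elements are $a^j$ and $a^j b$ for $0 \le j \le 2m-1$, and the dihedral group as $D_{2.2m} = \langle r, s \mid r^{2m} = 1,\ s^2 = 1,\ srs^{-1} = r^{-1} \rangle$, whose elements are $r^j$ and $r^j s$. The natural candidate is the map $\phi$ defined by $\phi(a^j) = r^j$ and $\phi(a^j b) = r^j s$; note that $\phi$ is a set bijection but not a group homomorphism, which is exactly what we expect since the two groups are not isomorphic. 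The whole proof then reduces to checking that $x$ and $y$ are adjacent in $\mathrm{EPow}(Q_{4m})$ if and only if $\phi(x)$ and $\phi(y)$ are adjacent in $\mathrm{Com}(D_{2.2m})$.

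For the enhanced power graph side I would first record the two structural facts that drive everything: the cyclic subgroup $\langle a \rangle$ has order $2m$, and every element $a^j b$ lying outside $\langle a \rangle$ satisfies $(a^j b)^2 = a^m$, hence has order $4$, so that $a^m$ is the unique involution of $Q_{4m}$. Consequently the cyclic subgroup generated by $a^j b$ is $\langle a^j b \rangle = \{1,\ a^j b,\ a^m,\ a^{j+m} b\}$, and a short centraliser computation using $bab^{-1}=a^{-1}$ shows that $C_{Q_{4m}}(a^j b) = \langle a^j b \rangle$ has order $4$. Since two vertices are joined in the enhanced power graph precisely when they lie in a common cyclic subgroup, this pins down all adjacencies: the $2m$ elements of $\langle a \rangle$ form a clique; each $a^j b$ is adjacent to $a^{j+m} b$ and to the two elements $1$ and $a^m$ of $\langle a \rangle \cap \langle a^j b \rangle$; and $a^j b$ has no further neighbours, because any cyclic subgroup containing $a^j b$ sits inside the abelian group $C_{Q_{4m}}(a^j b)$ and therefore cannot meet $\langle a \rangle$ outside $\{1,a^m\}$ nor contain any $a^k b$ with $k \not\equiv j \pmod m$.

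I would then carry out the parallel computation in $D_{2.2m}$. A direct check of the commuting relations shows that the rotations $\langle r \rangle$ form a clique of size $2m$, that a reflection $r^j s$ commutes with $r^k s$ exactly when $k \equiv j \pmod m$ (that is, $k = j$ or $k = j+m$), and that $r^j s$ commutes with a rotation $r^k$ exactly when $k \in \{0, m\}$, since $r^m$ is the unique non-trivial central element. This produces an edge set of precisely the same shape as on the quaternion side, and comparing the three cases shows that $\phi$ is the desired isomorphism. The main point to get right is the claim that the order-$4$ elements $a^j b$ have no enhanced-power neighbours beyond $a^{j+m}b, 1, a^m$; everything rests on the centraliser being as small as possible, and this is where the unique-involution structure of $Q_{4m}$ does the work. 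Finally I would remark on the role of the hypothesis: when an odd prime $p$ divides $m$ the order $2m$ of $\langle a \rangle$ is not a prime power, so $\langle a \rangle$ fails to be a clique in the ordinary power graph and $\mathrm{Pow}(Q_{4m}) \neq \mathrm{EPow}(Q_{4m})$; this is exactly the regime not covered by Theorem \ref{theo1} and explains why the enhanced power graph, rather than the power graph, is the correct object here.
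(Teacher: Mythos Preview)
Your proof is correct and follows essentially the same approach as the paper: both compute that $\mathrm{EPow}(Q_{4m})$ and $\mathrm{Com}(D_{2.2m})$ have the shape $(K_{2m-2}\cup mK_2)\nabla K_2$ and then exhibit the bijection sending $a^ib^j$ to $r^is^j$ (in the paper's notation, $h^ix^j\mapsto a^ib^j$). Your centraliser argument for why an element $a^jb$ acquires no extra enhanced-power neighbours is actually more explicit than the paper's treatment, while the paper additionally includes an edge-count showing $\mathrm{Pow}(Q_{4m})\not\cong\mathrm{Com}(D_{2.2m})$, which is not part of the theorem statement itself.
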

Note that if no odd prime $p$ divides $m$, then $Q_{4m} = Q_{2^{n+1}}$ for a suitable $n$ and Theorem \ref{theo1} holds.\\
In Section 2, we consider the presentations of the generalised quaternion group, the dicyclic group and the dihedral group. We provide some of their properties which will help us to find the power graph of the generalised quaternion group, the enhanced power graph of the dicyclic group and the commuting graph of dihedral group. In Section 3, we prove Theorem \ref{theo1} and Theorem \ref{theo2}. While proving these theorems, we also give the structure of the graphs involved. The structure of power graphs of certain finite groups are also discussed in \cite{mckemmie}. We will denote the center of a group $G$ by $Z(G)$ and the centraliser of an element $x$ by $C_G(x)$.
 

\section{Generalised quaternion group, dihedral group and dicyclic group}


The quaternion group $Q_8$ which is defined as $\langle h,x \mid h^4 =e , x^2=h^2, x^{-1}hx=h^{-1} \rangle $, has a more general form where the order of the group is a power of 2. We call it the generalised quaternion group and its presentation is $$Q_{2^{n+1}} = \langle h,x \mid h^{2^n} =e, x^2 = h^{2^{n-1}}, x^{-1}hx=h^{-1} \rangle .$$ 
The following lemma describes some well-known properties of $Q_{2^{n+1}}$ but for the sake of completeness, we provide the proof.

\begin{lemma}\label{prop_quat} Let $Q_{2^{n+1}}$ be the generalised quaternion group of order $2^{n+1}$.
\begin{enumerate}[{\rm (i)}]
    \item Each element of $Q_{2^{n+1}}$ is of the form $h^ix^j$ where $0 \leq i < 2^n$ and $j \in \{0,1\}$. \label{prop_quat_form}
    \item $|h^ix| = 4$ and its inverse is given by $h^jx$ where $ j=i+2^{n-1} $ for every $0 \leq i < 2^n$. \label{prop_quat_inverse}
    \item $Q_{2^{n+1}}$ has a unique element of order 2, namely $x^2 = h^{2^{n-1}}$. \label{prop_quat_convolution}
    \item $Z(Q_{2^{n+1}}) = \{ e, x^2 \}$.
\end{enumerate}
\end{lemma}
\begin{proof}
    The group $Q_{2^{n+1}}$ is generated by two elements $h$ and $x$. So, any element $a$ of $Q_{2^{n+1}}$ will be of the form $y_1^{\alpha_1}y_2^{\alpha_2}...y_k^{\alpha_k}$ where $y_m$ is either $h$ or $x$ and $k$ is some positive integer. We will show that $a=h^ix^j$ for some $0 \leq i < 2^n$ and $j \in \{ 0,1 \}$. By manipulating the relation $x^{-1}hx=h^{-1}$, we get that $xh=h^{2^{n-2} + 2^{n-1}}x$ and $a$ can be written as $a=h^ix^j$ for some $0 \leq i < 2^n$ and $0 \leq j < 4$. Also, since $x^2 = h^{2^{n-1}}$, we have $hx^2 = h^{2^n} = h^0$ and $hx^3=h^0x$. Therefore, $a=h^ix^j$ for some $0 \leq i < 2^n$ and $j \in \{0,1 \}$ which proves the first part. For the second part, observe that using induction we get $a^2=x^2$ for every $a=h^ix$. Since $x$ has order 4, $a$ has order 4. For the inverse of $a$, let $b=h^jx$ be the inverse of $a$ for some $0\leq j < 2^n$. Using the relation $hx=xh^{-1}$, we get that $h^ixh^jx=x^2h^{-(j-i)}$ which implies that $x^2h^{-(j-i)} = e$ if and only if $j=i+2^{n-1}$. To see that $Q_{2^{n+1}}$ has a unique element of order 2, consider $H=\langle h \rangle $ which is a subgroup of index 2. The subgroup $H$ has a unique element of order 2 since it is cyclic and each element of the coset $Hx$ has order 4 as shown above. The center contains $x^2$ as it is contained in every cyclic subgroup of $Q_{2^{n+1}}$ which implies that it is contained in the centraliser of each element. To see that no other non-trivial element belongs to the center, consider distinct elements $a=h^ix$, $b=h^jx$ and $c=h^k$ for some $0 \leq i,j < 2^n$ and $ 1 \leq k < 2^n$. Observe that $a$ commutes with $b$ if and only if $a=b^{-1}$ and $a$ commutes with $c$ if and only if $k=2^{n-1}$.
\end{proof}


The generalised quaternion group is a special case of the dicyclic group which is defined as: 
            $$Q_{4m} = \langle h,x \mid h^{2m} = e, h^m=x^2, x^{-1}hx=h^{-1} \rangle .$$
If we substitute $m=2^{n-1}$, we get the generalised quaternion group of order $2^{n+1}$. 

\begin{lemma}\label{prop_dicyclic} Let $Q_{4m}$ be the dicyclic group of order $4m$ where $m$ is a positive integer. 
\begin{enumerate}[{\rm (i)}]
    \item If $y \in Q_{4m}$ then $y=h^ix^j$ for some $0 \leq i < 2m$ and $j \in \{0,1\}$. \label{prop_dicyclic_element}
    \item If $y=h^ix$ for some $0 \leq i < 2m$ then $|y| = 4$ and $y^{-1} = h^{i+m}j$.
    \item There exists a unique element of order 2 in $Q_{4m}$, namely $x^2 = h^{m}$.
    \item $Z(Q_{4m}) = \{ e, x^2 \}$.
\end{enumerate}
\end{lemma}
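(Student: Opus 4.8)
The four assertions are the exact analogues of Lemma~\ref{prop_quat}, obtained by replacing the order $2^n$ of $h$ with $2m$ and the exponent $2^{n-1}$ with $m$. My plan is to carry the same arguments through, the point being that nothing in the proof of Lemma~\ref{prop_quat} actually used that $2^n$ is a power of $2$: it used only that $h$ has order $2m$ and that $x^2 = h^m$ with $m$ exactly half the order of $h$. The one relation one really needs is the consequence $hx = xh^{-1}$ of $x^{-1}hx = h^{-1}$, equivalently $x h^i = h^{-i} x$ for every $i$.

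For (i), I would begin with an arbitrary word in $h$ and $x$ and apply $xh = h^{-1}x$ repeatedly to move every occurrence of $h$ to the left of every $x$, obtaining a word of the form $h^a x^b$; the relation $x^2 = h^m$ then replaces each $x^2$ by $h^m$, so the exponent of $x$ may be taken in $\{0,1\}$, and finally $h^{2m}=e$ reduces the exponent of $h$ to the range $0 \le i < 2m$. Since the group has order $4m$ and there are exactly $4m$ such words, these normal forms are forced to be distinct, though uniqueness is not needed for the statement. For (ii), the computation $y^2 = h^i x h^i x = h^i(xh^i)x = h^i h^{-i} x^2 = x^2 = h^m$ shows every $y = h^i x$ squares to $h^m$; because $h^m \ne e$ while $(h^m)^2 = h^{2m}=e$, this gives $|y|=4$, and solving $y\,(h^j x) = h^{i-j}x^2 = h^{i-j+m} = e$ yields $j \equiv i+m \pmod{2m}$, i.e.\ $y^{-1} = h^{i+m}x$.

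For (iii), I would observe that $H = \langle h \rangle$ is cyclic of even order $2m$ and index $2$, hence contains a single involution, namely $h^m$, while every element of the nontrivial coset $Hx$ has order $4$ by (ii); thus $h^m = x^2$ is the unique element of order $2$. For (iv), I would first check that $x^2 = h^m$ is central directly, via $x^{-1} h^m x = (x^{-1}hx)^m = h^{-m} = h^m$, so that $h^m$ commutes with $x$ and trivially with $h$. To bound the centre from above I would compute the two commuting conditions exactly as in Lemma~\ref{prop_quat}: an element $h^k$ commutes with $h^i x$ iff $2k \equiv 0 \pmod{2m}$, i.e.\ $k \in \{0,m\}$, and $h^i x$ commutes with $h^j x$ iff $i \equiv j \pmod m$, i.e.\ the second element is the first or its inverse. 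Hence no $h^i x$, and no $h^k$ with $k \notin \{0,m\}$, can centralise the whole group, leaving $Z(Q_{4m}) = \{e, x^2\}$.

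Since each step reduces to short modular computations, I do not expect a genuine obstacle; the only point requiring care is the degenerate value $m = 1$, where $h^{-1} = h$ collapses the relation $x^{-1}hx = h^{-1}$ and $Q_4 \cong \mathbb{Z}_4$ becomes abelian, so that (iv) fails. The genuinely dicyclic, non-abelian behaviour underlying (iv) requires $m \ge 2$, which is exactly the range relevant to Theorem~\ref{theo2}, so I would either assume $m \ge 2$ throughout or record this exceptional case explicitly.
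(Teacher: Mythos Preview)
Your proposal is correct and follows exactly the route the paper takes: the paper's entire proof is the single sentence ``It is easy to verify that the above lemma holds by following a proof similar to Lemma~\ref{prop_quat},'' and you have supplied precisely those details. Your observation that part~(iv) fails for $m=1$ (where $Q_4\cong\mathbb{Z}_4$ is abelian) is a genuine gap in the lemma as stated in the paper; the paper silently avoids this by only invoking the lemma for $m\ge 2$ in Theorem~\ref{theo2}.
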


\noindent It is easy to verify that the above lemma holds by following a proof similar to Lemma \ref{prop_quat}.


The dihedral group is the set of symmetries of a regular $n$-gon where $n$ can be any positive integer. It has $2n$ elements. However, for the scope of this paper we need the order to be a multiple of 4 so we consider $n$ to be even. Let $D_{2.2m}$ denote the dihedral group of order $4m$ for any positive integer $m$. Then its presentation is
     $$D_{2.2m} = \langle a,b \mid a^{2m}=b^2=e, bab = a^{-1} \rangle.$$
\begin{lemma}\label{prop_dihedral_4m} Let $D_{2.2m}$ be the dihedral group of order $4m$.
    \begin{enumerate}[{\rm (i)}]
        \item If $r$ is a rotation in $D_{2.2m}$ then $r=a^i$ for some $0 \leq i < 2m$.
        \item If $r$ is a reflection in $D_{2.2m}$ then $r=a^ib$ for some $0 \leq i < 2m$.
        \item $Z(D_{2.2m})$ = $\{ e, a^m \}$. \label{prop_dihedral_center}
        \item $C_{D_{2.2m}}(a^i) = \langle a \rangle$ for every $0 \leq i < 2m $ and $i \neq m$.\label{prop_dihedral_cyclic_subgroup}
        \item $C_{D_{2.2m}}(a^ib) = \{ e,a^ib,a^m, a^{i+m}b \} $ for every $0 \leq i < 2m$. \label{prop_dihedral_centraliser}
    \end{enumerate}
\end{lemma}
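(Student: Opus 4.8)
The plan is to reduce everything to the single commutation rule $b a^j = a^{-j} b$ for all integers $j$, which follows by induction from $bab = a^{-1}$ together with $b^2 = e$ (so $b = b^{-1}$, whence $ba = a^{-1}b$). Equivalently $a^j b = b a^{-j}$. With this in hand, parts (i) and (ii) become the standard dihedral normal form: since $a$ and $b$ generate the group, any word in $a^{\pm 1}, b^{\pm 1}$ can be rewritten, by pushing each occurrence of $b$ to the right past powers of $a$ and collapsing $b^2 = e$, into the shape $a^i b^\epsilon$ with $\epsilon \in \{0,1\}$; reducing $i$ modulo $2m$ via $a^{2m} = e$ gives $0 \le i < 2m$. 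The $2m$ rotations are then exactly the elements of $\langle a \rangle$ (the $\epsilon = 0$ case) and the $2m$ reflections are the coset $\langle a \rangle b$ (the $\epsilon = 1$ case), accounting for all $4m$ elements.

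For part (iii), I would determine the center by testing commutation against the two generators. A rotation $a^i$ automatically commutes with $a$, and $a^i b = b a^i$ forces $a^{2i} = e$, i.e.\ $2i \equiv 0 \pmod{2m}$, so $i \in \{0, m\}$; thus the only central rotations are $e$ and $a^m$. A reflection $a^i b$ commutes with $a$ only if $a^{i+1}b = a^{i-1}b$, i.e.\ $a^2 = e$, which fails whenever $m \ge 2$; hence no reflection is central and $Z(D_{2.2m}) = \{e, a^m\}$. (In the degenerate case $m = 1$ the group is abelian and falls outside the non-abelian setting relevant to the theorems.)

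Parts (iv) and (v) are then direct centralizer computations using the same rule. For a noncentral rotation $a^i$ (so $i \ne 0, m$), the abelian subgroup $\langle a \rangle$ clearly centralizes it, and a reflection $a^j b$ commutes with $a^i$ iff $a^{i+j}b = a^{j-i}b$, i.e.\ $a^{2i} = e$, which again forces $i \in \{0, m\}$ and is excluded; hence $C_{D_{2.2m}}(a^i) = \langle a \rangle$. For a reflection $a^i b$, I would separately count commuting rotations and commuting reflections: $a^j$ commutes with $a^i b$ iff $2j \equiv 0 \pmod{2m}$, giving $j \in \{0, m\}$, while $a^k b$ commutes with $a^i b$ iff $2(k-i) \equiv 0 \pmod{2m}$, giving $k \in \{i, i+m\}$. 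Collecting these yields $C_{D_{2.2m}}(a^i b) = \{e, a^m, a^i b, a^{i+m}b\}$.

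The argument is entirely mechanical once the rewriting rule is fixed; the only place demanding care is the index arithmetic modulo $2m$ and the exclusion of the degenerate $m = 1$ case (and, strictly, the value $i = 0$ in part (iv), where the centralizer is the whole group rather than $\langle a \rangle$). I expect no genuine obstacle beyond this bookkeeping.
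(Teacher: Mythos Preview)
Your proposal is correct and follows essentially the same approach as the paper: the authors also reduce everything to the relation $ab = ba^{-1}$ (equivalently your $ba^j = a^{-j}b$) and compute when $a^ib$ commutes with $a^jb$ and with $a^k$, while simply declaring parts (i)--(iii) ``very well-known'' where you spell out the normal-form argument and the center computation. Your flagging of the $i=0$ case in part (iv) is apt --- the centralizer of $e$ is indeed the whole group, not $\langle a \rangle$ --- and is a slip in the statement that the paper's own proof does not address.
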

\begin{proof}
    The first three parts are very well-known. Consider three distinct elements of $D_{2.2m}$ denoted by $s=a^ib$, $t=a^jb$ and $u=a^k$ for some $0\leq i,j < 2m$ and $1 \leq k < 2m$. Then, using the relation $ab = ba^{-1}$ we can see that $s$ commutes with $t$ if and only if $a^{2(j-i)}=e$ which gives us $j=i+m$. Also, using the same relation we get that $s$ and $u$ commute if and only if $a^{2k}=e$, that is, $k=m$. This proves the remaining parts.
\end{proof}


Note that in the presentation of the dihedral group stated above, by substituting $m=2^{n-1}$, we can get the order to be equal to that of the generalised quaternion group. This will help us to compare their graphs in the next section. By the same substitution in the Lemma \ref{prop_dihedral_4m}, we get the properties of dihedral group of order $2^{n+1}$.


\section{The Isomorphisms}


The generalised quaternion group $Q_{2^{n+1}}$ is not isomorphic to the dihedral group $D_{2.2^n}$ as $Q_{2^{n+1}}$ has a unique involution whereas $D_{2.2^n}$ does not, but we show an isomorphism exists between their power graph and commuting graph respectively. We use the presentations and notations established in the previous section. We use the symbol $ \Gamma_1 \cup \Gamma_2$ to describe the disjoint union of graphs, the symbol $ \Gamma_1 \underline\cup \Gamma_2$ to describe the non-disjoint union of graphs and the symbol $ \Gamma_1 \nabla \Gamma_2$ to describe the graph join of graphs $\Gamma_1$ and $\Gamma_2$. The union of graphs $ \Gamma_1$ and $ \Gamma_2$ is a graph whose vertex set is the union of $V(\Gamma_1)$ and $V(\Gamma_2)$ and the edge set is the union of $E(\Gamma_1)$ and $E(\Gamma_2)$. In $ \Gamma_1 \cup \Gamma_2$, we consider the vertices and edges of $ \Gamma_1$ and $\Gamma_2$ distinct regardless of their labels whereas in $ \Gamma_1 \underline\cup \Gamma_2$, we take the graph union by merging labelled vertices and edges. We consider $ \Gamma_1$ and $\Gamma_2$ with disjoint vertices to define the graph join. The graph join $ \Gamma_1 \nabla \Gamma_2$ is a graph whose vertex set is $V(\Gamma_1) \cup V(\Gamma_2)$ and the edge set is $E(\Gamma_1) \cup E(\Gamma_2)$ together with all the edges joining $V(\Gamma_1)$ and $V(\Gamma_2)$. The symbol $K_n$ is used for a complete graph on $n$ vertices.



\setcounter{theorem}{0}
\begin{theorem}\label{theorem1}
     Let $Q_{2^{n+1}}$ denote the generalised quaternion group of order $2^{n+1}$ and $D_{2.2^n}$ denote the dihedral group of the same order. For each $n \in \mathbb{N}$, {\rm Pow(}$Q_{2^{n+1}}${\rm )} $\cong$ {\rm Com(}$D_{2.2^n}${\rm )}.
\end{theorem}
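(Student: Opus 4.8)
The plan is to compute both graphs explicitly, show that each equals the \emph{same} join of standard graphs, and then read off the isomorphism from the natural index-matching bijection between $Q_{2^{n+1}}$ and $D_{2.2^n}$. Throughout I set $m = 2^{n-1}$, so that both groups have order $2^{n+1} = 4m$ and the central involutions are $x^2 = h^{2^{n-1}}$ and $a^m$ respectively.

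First I would determine {\rm Pow(}$Q_{2^{n+1}}${\rm )}. The cyclic subgroup $\langle h \rangle$ has order $2^n$; being a cyclic $2$-group its subgroups form a chain, so for any two of its elements one lies in the cyclic group generated by the other, and hence the power graph restricted to $\langle h \rangle$ is the complete graph $K_{2^n}$. By Lemma \ref{prop_quat}, the remaining $2^n$ elements $h^i x$ all have order $4$ and satisfy $\langle h^i x \rangle = \{e, h^i x, x^2, h^{i+2^{n-1}}x\}$, where $h^{i+2^{n-1}}x = (h^i x)^{-1}$. Checking the neighbours of each $h^i x$, its only neighbours are $e$, the unique involution $x^2$, and its inverse $h^{i+2^{n-1}}x$; in particular no $h^i x$ is a power of any element of $\langle h \rangle \setminus \{e, x^2\}$, and no such element is a power of $h^i x$. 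Since $e$ and $x^2$ are universal vertices, this yields
$$\text{Pow}(Q_{2^{n+1}}) \cong K_2 \, \nabla \, \left[\, K_{2^n - 2} \; \cup \; (2^{n-1} \cdot K_2)\,\right],$$
where $K_2$ is $\{e, x^2\}$, the factor $K_{2^n-2}$ comes from $\langle h \rangle \setminus \{e, x^2\}$, and the $2^{n-1}$ disjoint edges come from the inverse pairs among the order-$4$ elements.

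Next I would compute {\rm Com(}$D_{2.2^n}${\rm )} using Lemma \ref{prop_dihedral_4m}. The rotations $\langle a \rangle$ form an abelian subgroup, so they induce $K_{2^n}$, and by part (iii) the centre $\{e, a^m\}$ is universal. Part (v) shows that two reflections $a^i b$ and $a^j b$ commute iff $j = i+m$, so the reflections split into $2^{n-1}$ commuting pairs $\{a^i b, a^{i+m}b\}$; by parts (iv)--(v) no reflection commutes with a rotation outside the centre. Hence
$$\text{Com}(D_{2.2^n}) \cong K_2 \, \nabla \, \left[\, K_{2^n - 2} \; \cup \; (2^{n-1} \cdot K_2)\,\right],$$
with the identical block decomposition. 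Comparing the two displays, the bijection $\phi$ defined by $\phi(h^i) = a^i$ and $\phi(h^i x) = a^i b$ sends $\langle h \rangle$ to $\langle a \rangle$, the universal pair $\{e, x^2\}$ to $\{e, a^m\}$, and each inverse pair $\{h^i x, h^{i+m}x\}$ to the commuting pair $\{a^i b, a^{i+m}b\}$, and is therefore a graph isomorphism.

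I expect the only delicate point to be the verification of the \emph{non}-adjacencies rather than the adjacencies. It is immediate that inverse pairs of order-$4$ elements are joined and that the central elements dominate, but one must confirm there are no further edges: specifically that no order-$4$ element $h^i x$ is a power of a noncentral rotation-type element $h^k$ (and conversely), which matches, on the dihedral side, the fact that a reflection commutes with no noncentral rotation. Once both graphs are reduced to the common expression $K_2 \, \nabla \, [\,K_{2^n-2} \cup (2^{n-1}\cdot K_2)\,]$, the isomorphism is forced, and $\phi$ realises it explicitly.
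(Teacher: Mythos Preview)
Your proposal is correct and follows essentially the same approach as the paper: you compute the power graph of $Q_{2^{n+1}}$ and the commuting graph of $D_{2.2^n}$ as $K_2 \nabla [K_{2^n-2} \cup (2^{n-1}\cdot K_2)]$ using Lemmas~\ref{prop_quat} and~\ref{prop_dihedral_4m}, and use the same bijection $h^i x^j \mapsto a^i b^j$. The only difference is organisational---the paper verifies that this bijection is edge-preserving case by case rather than first writing out both graphs in the identical join form---but the underlying argument and the key structural observations are the same.
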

\begin{proof}

Our first step will be to figure out the edge set of Pow($Q_{2^{n+1}}$). Note that in Pow($G$), the identity element of $G$ is adjacent to every vertex. As seen in Lemma \ref{prop_quat}(\ref{prop_quat_form}), the elements of $Q_{2^{n+1}}$ are either of the form $h^i$ or $h^ix$ for $0 \leq i < 2^n$.  Theorem 2.12 of \cite{Chakrabarty_Ghosh} states that for a finite cyclic $p$-group $G$, the power graph of $G$ is complete. Using this, we can see that if $s=h^i$ then $s$ is adjacent to $h^j$ for every $0 \leq i,j < 2^n$. In addition, the unique convolution $h^{2^{n-1}}$ is adjacent to every vertex of the form $s=h^ix$ as $s^2=x^2=h^{2^{n-1}}$. Now, let $t=h^ix$. Clearly, $t$ is adjacent to $t^{-1}$. For any other $s \not\in \langle t \rangle$, we see that $\langle s \rangle \cap \langle t\rangle = \{ e, x^2\}$. Therefore, $s$ and $t$ are not adjacent. So, we have Pow($Q_{2^{n+1}}$) = $(K_{2^n-2}$ $\cup$ $(2^{n-1})K_2)$ $\nabla$ $K_2$.

Consider a function $f:Q_{2^{n+1}} \rightarrow D_{2.2^n}$ that maps $h^ix^j$ to $a^ib^j$ for $0 \leq i,j < 2^n$. This map is a graph isomorphism. Clearly, $f$ is one-one and onto. 
For $f$ to be edge-preserving, we show that $s,t \in Q_{2^{n+1}}$ are adjacent in the power graph if and only if $f(s),f(t)$ are adjacent in the commuting graph of $D_{2.2^n}$. In Com($G$), the identity of group $G$ is universal vertex as it commutes with every element of $G$. The function $f$ maps identity of $Q_{2^{n+1}} $ to identity of $D_{2.2^n}$ and both are universal vertices in their respective graphs. The function $f$ maps $h^{2^{n-1}}$ to $a^{2^{n-1}}$. By Lemma \ref{prop_dihedral_4m}(\ref{prop_dihedral_center}), we have $a^{2^{n-1}}$ is central and hence it is also a universal vertex in the commuting graph of $D_{2.2^n}$. Now, $h^i$ is mapped to $a^i$ under $f$. By Lemma \ref{prop_dihedral_4m}(\ref{prop_dihedral_cyclic_subgroup}), the centraliser of $a^i$ is $\langle a \rangle$, therefore $a^i$ is adjacent to $a^j$ for every $0 \leq j <2^n$. Lastly, $f(h^ix)=a^ib$ and by Lemma \ref{prop_dihedral_4m}(\ref{prop_dihedral_centraliser}), we have $a^ib$ is adjacent to $a^{i+2^{n-1}}b$, that is, $f(h^{i+2^{n-1}}x)$ is adjacent to $f(h^ix)$. 
Conversely, we show that if two vertices are not adjacent in Pow($Q_{2^{n+1}}$) then their images are also not adjacent in Com($D_{2.2^n}$). Let $s,t \in Q_{2^{n+1}} \symbol{92} \{e, h^{2^{n-1}} \}$ such that $s=h^i$ and $t=h^jx$ for some $i,j$. As shown above, an edge does not join the vertices $s$ and $t$. By Lemma \ref{prop_dihedral_4m}, $f(s)=a^i$ and $f(t)=a^jb$ commute if and only if $i=2^{n-1}$, that is, $s=h^{2^{n-1}}$ which is a contradiction. Hence, $f(s)$ and $f(t)$ are also not adjacent. Also, if $s,t \in Q_{2^{n+1}} \symbol{92} \{e, h^{2^{n-1}} \}$ such that $s=h^ix$ and $t=h^jx$ for some $j \neq i+2^{n-1}$. Then, they are not adjacent in Pow($Q_{2^{n+1}}$). Their images $f(s)=a^ib$ and $f(t)=a^jb$ are also not adjacent whenever $j \neq i+2^{n-1}$. This proves that $f$ is edge-preserving.
\end{proof} 

The figure below illustrates the power graph of the generalised quaternion group of order 32. As we can see that Pow($Q_{32}$) = $(K_{14}$ $\cup$ $8K_2)$ $\nabla$ $K_2$. The set $\langle h \rangle \symbol{92} Z(Q_{32})$ is the vertex set of $K_{14}$ and there are 8 complete graphs on the vertex sets of the form $\{ h^ix, h^{i+8}x \}$ where $0 \leq i \leq 7$, that is, 8 copies of $K_2$. We take the union of these graphs and join it with a complete graph on $\{ e, h^8 \}$.

\begin{figure}[H]
\centering
\includegraphics[scale=0.45]{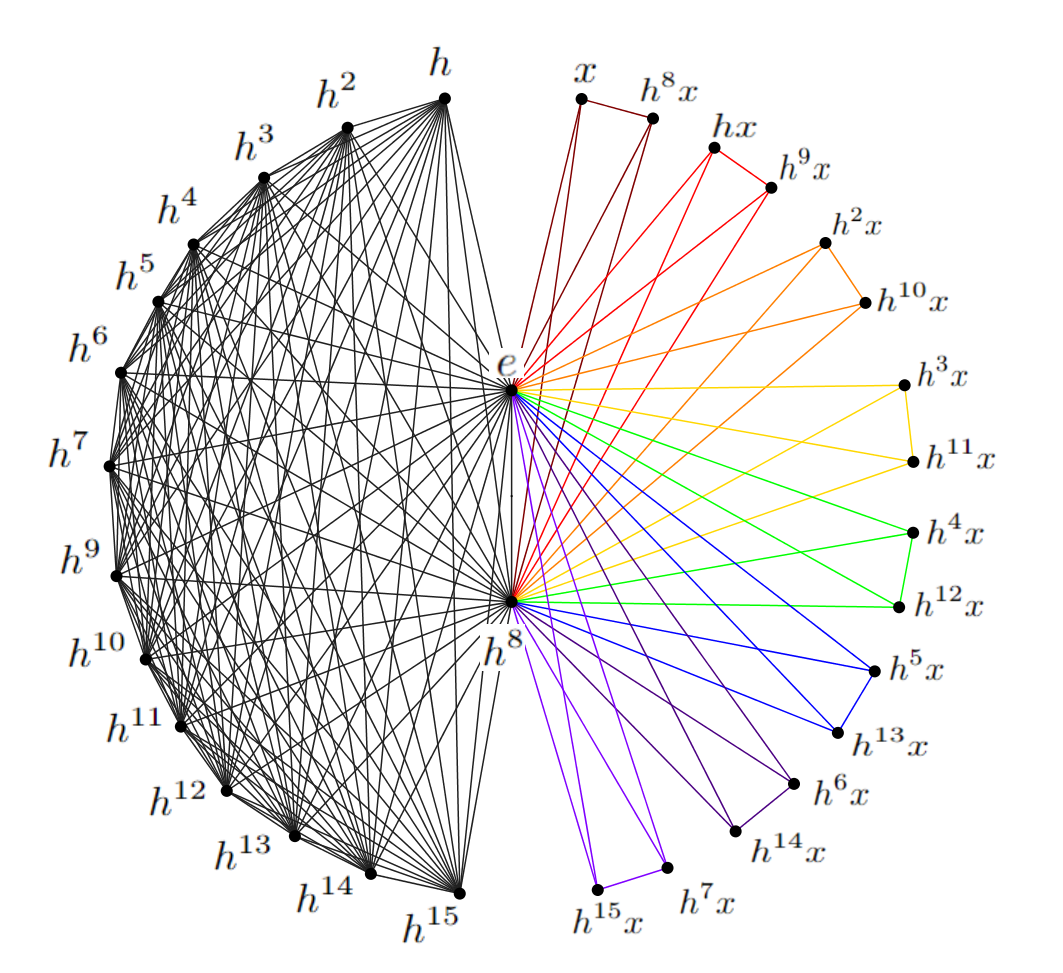}
\caption{Power graph of the generalised quaternion group of order 32}\label{quat_pow}
\end{figure}

\begin{remark}
    For a finite group $G$, we have $E$(Pow($G$)) $\subseteq$ $E$(EPow($G$)) $\subseteq$ $E$(Com($G$)). Proposition 3.2 of \cite{PJC_review} states that Pow($G$) = EPow($G$) if and only if $G$ does not have a subgroup isomorphic to $C_p \times C_q$ for distinct primes $p,q$ and EPow($G$) = Com($G$) if and only if $G$ does not have a subgroup isomorphic to $C_p \times C_p$ for prime $p$. Using this result we get that for the generalised quaternion group, all three graphs are equal. Since the dihedral group of order $2^{n+1}$ has a subgroup isomorphic to $C_2 \times C_2$, we have Pow($D_{2.2^n}$) = EPow($D_{2.2^n}$) $\neq$ Com($D_{2.2^n}$).
\end{remark}

The above theorem motivated us to check the isomorphism between the dicyclic group's power graph and the dihedral group's commuting graph. In the next result, we describe the structure of the power graph of dicyclic group and the commuting graph of dihedral group and we show that the two can't be isomorphic. Instead, there exists an isomorphism between the enhanced power graph of the dicyclic group and the commuting graph of the dihedral group. 

\begin{theorem}\label{theorem2}
    Let $Q_{4m}$ be the dicyclic group and $D_{2.2m}$ be the dihedral group of order $4m$ for integer $m \geq 2$. If there exists an odd prime $p$ that divides $m$ then {\rm EPow(}$Q_{4m}${\rm )} $\cong$ {\rm Com(}$D_{2.2m}${\rm )}.
\end{theorem}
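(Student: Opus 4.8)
The plan is to pin down the isomorphism type of each graph separately and then check that the very map $f(h^ix^j)=a^ib^j$ from the proof of Theorem~\ref{theorem1} is again a graph isomorphism. The punchline is that both graphs equal $(K_{2m-2}\cup mK_2)\,\nabla\,K_2$, so once the two structures are computed the isomorphism follows by matching vertex classes. The role of the hypothesis ``an odd prime divides $m$'' is only to place us in the regime where the ordinary power graph fails: it is the enhanced power graph that recovers the correct structure, and I will indicate exactly where this matters.

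First I would compute EPow($Q_{4m}$) from Lemma~\ref{prop_dicyclic}. By part (i) the elements are the $h^i$ and the $h^ix$ with $0\le i<2m$, and every cyclic subgroup is contained either in $\langle h\rangle$, of order $2m$, or in one of the order-$4$ subgroups $\langle h^ix\rangle=\{e,h^ix,h^m,h^{i+m}x\}$. The decisive point --- and the reason the enhanced power graph is the right object here --- is that $\langle h\rangle$ is a \emph{single} cyclic group, so any two of its elements generate a cyclic subgroup; hence $\langle h\rangle$ induces a complete graph $K_{2m}$. The unique involution $h^m=x^2$ lies in every even-order cyclic subgroup, so together with $e$ it supplies two universal vertices. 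For the remaining edges I would exploit the relation $x^{-1}hx=h^{-1}$: since every $h^jx$ inverts every power of $h$, a non-central $h^i$ (with $i\ne 0,m$) generates a non-abelian, hence non-cyclic, subgroup together with any $h^jx$, so there are no edges between $\langle h\rangle\setminus\{e,h^m\}$ and the order-$4$ elements; the same conjugation computation shows $\langle h^ix,h^jx\rangle$ is cyclic exactly when $j\equiv i\pmod m$, i.e.\ $j=i$ or $j=i+m$. This yields $m$ disjoint copies of $K_2$ on the pairs $\{h^ix,h^{i+m}x\}$ and a $K_{2m-2}$ on $\langle h\rangle\setminus\{e,h^m\}$, all joined to the central $K_2$ on $\{e,h^m\}$; that is, EPow($Q_{4m}$) $=(K_{2m-2}\cup mK_2)\,\nabla\,K_2$.

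Next I would read Com($D_{2.2m}$) directly off Lemma~\ref{prop_dihedral_4m}: parts (iii)--(v) give that $\{e,a^m\}$ are universal vertices, that the rotations $\langle a\rangle\setminus\{e,a^m\}$ form a clique $K_{2m-2}$ because $C_{D_{2.2m}}(a^i)=\langle a\rangle$, that each reflection $a^ib$ commutes only with $e,a^m$ and $a^{i+m}b$ so the reflections split into $m$ copies of $K_2$ on the pairs $\{a^ib,a^{i+m}b\}$, and that no non-central rotation commutes with any reflection. Hence Com($D_{2.2m}$) $=(K_{2m-2}\cup mK_2)\,\nabla\,K_2$ as well. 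It then remains to verify that $f(h^ix^j)=a^ib^j$ is an isomorphism: it is a bijection on the coordinate description, it sends the universal pair $\{e,h^m\}$ to $\{e,a^m\}$, the clique $\langle h\rangle$ onto the rotation clique $\langle a\rangle$, and each pair $\{h^ix,h^{i+m}x\}$ onto $\{a^ib,a^{i+m}b\}$, so comparing the graphs class by class shows $f$ preserves both adjacency and non-adjacency. This is the same verification as in Theorem~\ref{theorem1}, now carried out for the enhanced power graph.

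I expect the only genuine subtlety to be the first step --- the enhanced-power-graph bookkeeping that turns $\langle h\rangle$ into a clique. This is exactly why the ordinary power graph cannot be substituted when an odd prime $p$ divides $m$: then $\langle h\rangle$ contains a subgroup $C_2\times C_p$, so by the criterion quoted in the Remark above, Pow($Q_{4m}$)$\,\ne\,$EPow($Q_{4m}$); concretely, the involution $h^m$ and an element of order $p$ are non-adjacent in the power graph, so $\langle h\rangle$ is not a clique there and the rotation clique of Com($D_{2.2m}$) cannot be matched by Pow($Q_{4m}$). The enhanced power graph restores completeness on $\langle h\rangle$, and the isomorphism then goes through. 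Everything else reduces to the centraliser computations of Lemma~\ref{prop_dihedral_4m} and the conjugation identity for $Q_{4m}$, which are routine.
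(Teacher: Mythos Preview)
Your proposal is correct and follows essentially the same approach as the paper: both compute $\mathrm{EPow}(Q_{4m})=(K_{2m-2}\cup mK_2)\,\nabla\,K_2=\mathrm{Com}(D_{2.2m})$ and exhibit the bijection $f(h^ix^j)=a^ib^j$ as the graph isomorphism. Your treatment is in fact slightly more direct --- you go straight to the enhanced power graph and justify the non-edges via the non-abelian argument, whereas the paper first analyses $\mathrm{Pow}(Q_{4m})$, proves by an edge count that it is \emph{not} isomorphic to $\mathrm{Com}(D_{2.2m})$, and only then passes to $\mathrm{EPow}$; that extra step is not required by the theorem statement itself and you correctly relegate it to a closing remark.
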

\begin{proof}
    Consider the Pow($Q_{4m}$). The identity element is a universal vertex in Pow($Q_{4m}$). By Lemma \ref{prop_dicyclic}(\ref{prop_dicyclic_element}), an element of $Q_{4m}$ is either of the form $h^i$ or $h^ix$. Let $s \in Q_{4m}$. If $s=h^ix$, then $s^2=x^2=h^{m}$. Therefore, $s$ is adjacent to $h^m$. Let $t=h^jx$ for $i \neq j$ . By Lemma \ref{prop_dicyclic}, the elements $s,t$ are adjacent if and only if $s$ is the inverse of $t$, that is, $j=i+m$. 
    If an odd prime $p$ divides $m$, then $Q_{4m}$ has an element of order $p$, say $v$. As opposed to what we saw in Pow($Q_{2^{n+1}}$), the induced subgraph on $\langle h \rangle$ can not be complete because we will not have an edge joining the element $h^m$ and the element $v$ as their orders are co-prime. Lastly, the elements $s=h^i$ and $t=h^jx$ are not adjacent in Pow($Q_{4m}$) for $s,t \in Q_{4m} \symbol{92} \{ e,h^m \}$. So,
    Pow($Q_{4m}$) = Pow($\langle h \rangle$) $\underline\cup$ (m$K_2$ $\nabla$ Pow($Z(Q_{4m})$)).
    
    In the commuting graph of $D_{2.2m}$, elements $e$ and $a^m$ are central by Lemma \ref{prop_dihedral_4m} and therefore adjacent to every other vertex. The induced subgraph on $\langle a \rangle$ is complete. Let $t_1=a^ib$ and $t_2=a^jb$ be distinct. Then, they are adjacent if and only if $j=i+m$. For $i \neq m$, no element of the form $a^i$ is adjacent to any element of the form $a^jb$. Therefore, Com($D_{2.2m}$) = $( K_{2m-2} \cup mK_2 ) \nabla K_2$.
    
    To show that both of these graphs are not isomorphic, we count the number of edges in each of them. In Com($D_{2.2m}$), we have a complete graph on $2m$ vertices and $m$ copies of a complete graph on 4 vertices but all these complete graphs have an edge in common which is the edge that joins $e$ and $a^m$. We know that the number of edges in a complete graph on $n$ vertices is $n(n-1)/2$. So, we get $m(2m-1) + 5m$ edges in Com($D_{2.2m}$). However, in Pow($Q_{4m}$), we have $m$ copies of a complete graph on 4 vertices but the induced subgraph on $\langle h \rangle$ is not complete. Therefore, we have strictly less than $m(2m-1) + 5m$ edges in Pow($Q_{4m}$). Hence, these two graphs can not be isomorphic to each other.
    Now, consider EPow($Q_{4m}$). The induced subgraph on $\langle h \rangle$ is complete and all other edges in EPow($Q_{4m}$) lie in Pow($Q_{4m}$) which gives us EPow($Q_{4m}$) = $(K_{2m-2}$ $\cup$ $m K_2)$ $\nabla$ $K_2$.
    So, we define a map $f$ from $Q_{4m}$ to $D_{2.2m}$ which sends $h^ix^j$ to $a^ib^j$. This map is one-one, onto as well as edge-preserving and hence, we have our result.    
\end{proof}

The figure below describes the power graph of the dicyclic group of order $4m$ which aligns with the structure mentioned in the above proof.

\begin{figure}[H]
\centering
\includegraphics[scale=0.45]{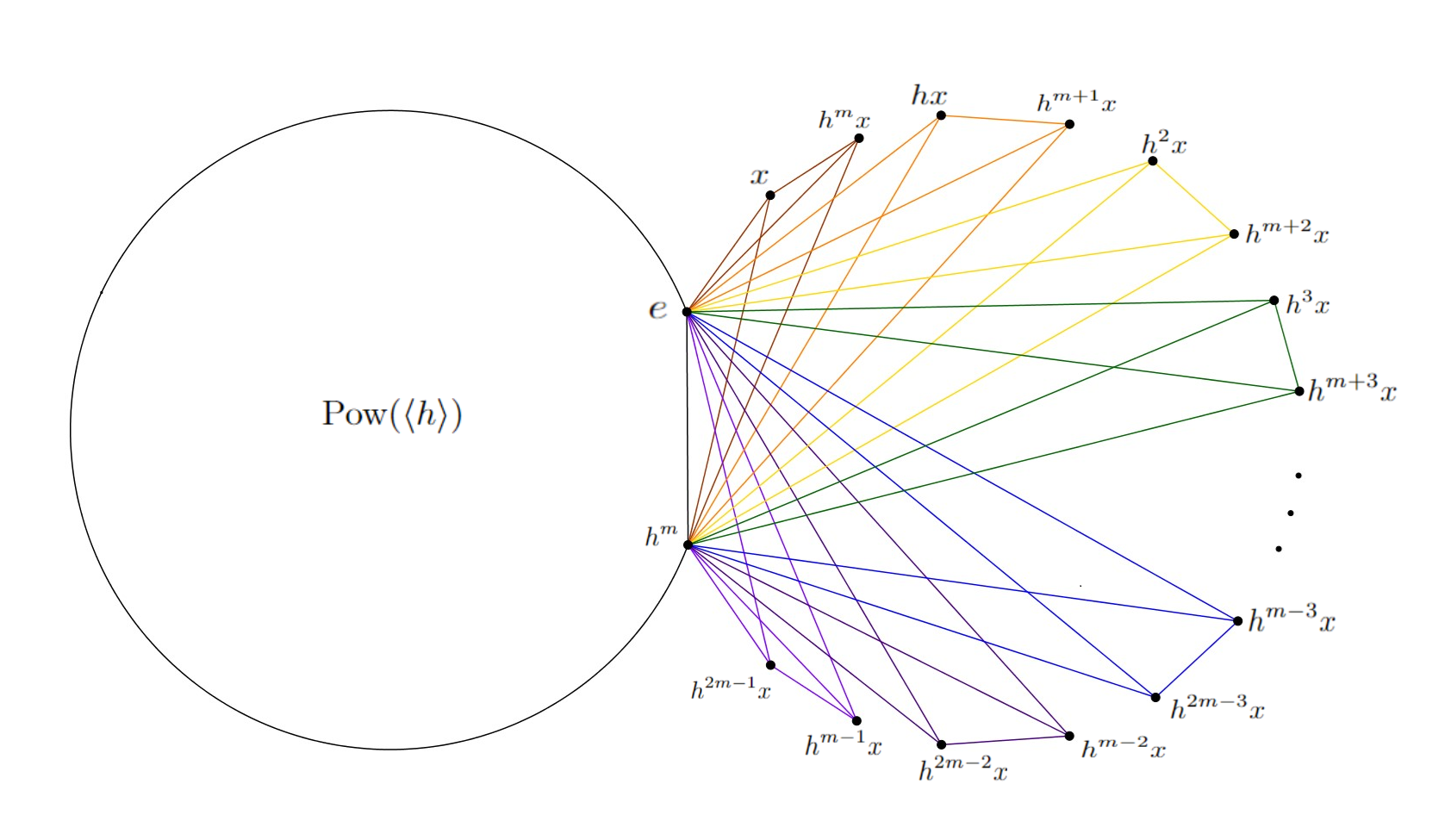}
\caption{Power graph of the dicyclic group of order $4m$}\label{dicyclic_pow}
\end{figure}
 
\begin{remark}
    By the result stated in the previous remark, we can see that Pow($Q_{4m}$) $\neq$ EPow($Q_{4m}$) = Com($Q_{4m}$) and Pow($D_{2.2m}$) $\neq$ EPow($D_{2.2m}$) $\neq$ Com($D_{2.2m}$).
\end{remark} 



%

\newpage



\end{document}